\newtheorem{thm}{Theorem}[section]
\newtheorem{lem}[thm]{Lemma}
\theoremstyle{definition}
\theoremstyle{remark}
\numberwithin{equation}{section}
\newcommand{\R}{\mathbb R}
\newcommand{\eps}{\varepsilon}
\newcommand{\rt}{\rightarrow}
\begin{document}

\title[Almost-Einstein and nonnegative isotropic curvature]{On the smooth rigidity of almost-Einstein\\ manifolds with
nonnegative isotropic curvature}
\author{harish seshadri}
\address{department of mathematics,
Indian Institute of Science, Bangalore 560012, India}
\email{harish@math.iisc.ernet.in}


\thanks{This work was supported by DST Grant No. SR/S4/MS-283/05}
\begin{abstract}

Let $(M^n,g)$, \ $n \ge 4$, be a compact simply-connected
Riemannian manifold with nonnegative isotropic curvature. Given
$0<l\le L$, we prove that there exists $\eps = \eps (l,L,n)$
satisfying the following: If the scalar curvature $s$ of $g$
satisfies
$$   l  \le  s  \le  L $$
and the Einstein tensor satisfies
$$ \vert Ric - \frac {s}{n}g \vert \le \eps$$
then $M$ is diffeomorphic to a symmetric space of compact type.

This is a smooth analogue of the result of S. Brendle that a
compact Einstein manifold with nonnegative isotropic curvature is
isometric to a locally symmetric space.

\end{abstract}
\maketitle
\section{Introduction}

A Riemannian manifold $(M,g)$ is said to have nonnegative
isotropic curvature if
$$\ R_{1313} +R_{1414}+R_{2323}+R_{2424}-2R_{1234} \ge 0$$
for every orthonormal 4-frame $\{e_1,e_2,e_3,e_4\}$.

In the case of strict inequality above we say that the manifold
has positive isotropic curvature.  Recently S. Brendle proved that
a compact Einstein manifold with nonnegative isotropic curvature
has to be a locally symmetric space of compact type. In this note
we relax the restriction that the metric is Einstein to the
condition that the Einstein tensor is small in norm and obtain the
following smooth rigidity result:

\begin{thm}\label{ma}
Let $(M^n,g)$, \ $n \ge 4$, be a compact simply-connected
Riemannian manifold with nonnegative isotropic curvature. Given
$0<l \le L$, there exists $\eps = \eps (l,L,n)$ satisfying the
following: If the scalar curvature $s$ of $g$ satisfies
$$     l  \le  s  \le L $$
and the Einstein tensor satisfies
$$ \vert Ric - \frac {s}{n}g \vert \le \eps$$
then $M$ is diffeomorphic to a symmetric space of compact type.

\end{thm}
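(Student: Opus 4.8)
The plan is to argue by contradiction and reduce the statement to Brendle's rigidity theorem through a convergence argument. Suppose the conclusion fails for some $0<l\le L$ and some $n\ge4$. Then for every $i\in\mathbb N$ there is a compact simply-connected $(M_i^n,g_i)$ with nonnegative isotropic curvature for which $l\le s_{g_i}\le L$ and $\abs{Ric_{g_i}-\tfrac{s_{g_i}}{n}g_i}\le 1/i$, yet $M_i$ is diffeomorphic to no symmetric space of compact type. I will show that a subsequence of the $(M_i,g_i)$ converges, up to diffeomorphism, to a manifold $(M_\infty,g_\infty)$ that is Einstein, simply connected, and has nonnegative isotropic curvature and positive scalar curvature; Brendle's theorem then forces $M_\infty$ to be a symmetric space of compact type, and since $M_i$ is diffeomorphic to $M_\infty$ for large $i$ this contradicts the choice of the $M_i$.

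The first step is a package of a priori bounds depending only on $l,L,n$. From $l\le s_{g_i}\le L$ and $\abs{Ric_{g_i}-\tfrac{s_{g_i}}{n}g_i}\le 1/i$, every eigenvalue of $Ric_{g_i}$ lies in $[\tfrac ln-\tfrac1i,\ \tfrac Ln+\tfrac1i]$, so for $i$ large $Ric_{g_i}\ge\kappa\,g_i$ with $\kappa=\kappa(l,n)>0$ fixed; Bonnet--Myers then gives $\mathrm{diam}(M_i,g_i)\le D(l,n)$ and Bishop--Gromov an upper volume bound. Next, nonnegative isotropic curvature together with this two-sided Ricci bound forces a two-sided \emph{sectional} curvature bound: applying the defining inequality to an orthonormal $4$-frame $\{e_1,e_2,e_3,e_4\}$ and to the frame obtained by replacing $e_3$ with $-e_3$, and averaging, yields
\[
K(e_1,e_3)+K(e_1,e_4)+K(e_2,e_3)+K(e_2,e_4)\ \ge\ 0
\]
for every orthonormal $4$-frame. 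Fixing an orthonormal pair $\{e_1,e_2\}$, completing it to an orthonormal basis, summing the last inequality over all pairs $\{j,k\}\subset\{3,\dots,n\}$, and using $\sum_{j\ne1}K(e_1,e_j)=Ric(e_1,e_1)$, one gets $K(e_1,e_2)\le\tfrac12\big(Ric(e_1,e_1)+Ric(e_2,e_2)\big)\le\tfrac Ln+1$; the identity $K(e_1,e_2)=Ric(e_1,e_1)-\sum_{j\ge3}K(e_1,e_j)$ then bounds $K(e_1,e_2)$ from below by a constant depending only on $l,L,n$. By Berger's lemma this gives $\abs{Rm_{g_i}}\le C_0(l,L,n)$.

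The remaining task is to rule out collapse, and this is the step I expect to be the main obstacle. The point is that a uniform positive lower bound on Ricci is incompatible with collapse under a uniform curvature bound. If $\mathrm{vol}(M_i,g_i)\to0$ while $\abs{Rm_{g_i}}\le C_0$, then by the structure theory of collapse with bounded curvature (Cheeger--Fukaya--Gromov) the $g_i$ are, on local covers, arbitrarily close to metrics invariant under a local nilpotent Killing structure whose orbits are positive-dimensional of diameter tending to zero; the orbit directions become ``almost parallel'', and a Bochner-type estimate --- the curvature bound controls the first and second covariant derivatives of a Killing field along a short orbit --- shows the sectional curvatures involving these directions tend to $0$, hence $Ric_{g_i}$ acquires eigenvalues tending to $0$, contradicting $Ric_{g_i}\ge\kappa>0$. (Equivalently, one rescales at a point of least injectivity radius; the pointed blow-up limit would be complete, flat, collapsed, yet noncollapsed at the basepoint.) Thus $\mathrm{vol}(M_i,g_i)\ge v_0(l,L,n)>0$.

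With $\abs{Rm_{g_i}}\le C_0$, $\mathrm{diam}(M_i,g_i)\le D$ and $\mathrm{vol}(M_i,g_i)\ge v_0$, Cheeger's lemma gives a uniform lower injectivity-radius bound, and the Cheeger--Gromov compactness theorem yields a subsequence, a closed manifold $M_\infty$, and diffeomorphisms $\phi_i:M_\infty\to M_i$ with $\phi_i^*g_i\to g_\infty$ in $C^{1,\alpha}$ for every $\alpha<1$. The limit $g_\infty$, a priori only $C^{1,\alpha}$, is weakly Einstein: passing to the limit in $Ric_{g_i}=\tfrac{s_{g_i}}{n}g_i+E_i$ with $\abs{E_i}\le1/i$ and $s_{g_i}\in[l,L]$ (after extracting a weak-$*$ limit of the $s_{g_i}$) gives $Ric_{g_\infty}=\tfrac{s_{g_\infty}}{n}g_\infty$ distributionally, and the contracted Bianchi identity forces $s_{g_\infty}$ to equal a constant $c\in[l,L]$; elliptic regularity in harmonic coordinates then makes $g_\infty$ a smooth Einstein metric with positive scalar curvature. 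Moreover $g_\infty$ has nonnegative isotropic curvature, since that is a closed convex condition on the curvature tensor and hence stable under the weak-$*$ convergence $Rm_{g_i}\rightharpoonup Rm_{g_\infty}$ in $L^\infty$. Being compact, simply connected (diffeomorphic to the $M_i$), and Einstein with nonnegative isotropic curvature and positive scalar curvature, $M_\infty$ is by Brendle's theorem a locally symmetric space of compact type, hence a symmetric space of compact type. Since $M_i$ is diffeomorphic to $M_\infty$ for all large $i$, this contradicts the choice of the $M_i$ and proves the theorem.
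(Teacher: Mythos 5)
Your a priori bounds (uniform positivity of Ricci, hence a diameter bound, and the two-sided sectional curvature bound extracted from $K^{iso}\ge 0$ together with the Ricci pinching) are correct and essentially reproduce the paper's Lemma \ref{cur}, and your limiting argument at the end matches the paper's. The fatal problem is the non-collapsing step. The assertion that a uniform positive lower Ricci bound is incompatible with collapse under a two-sided sectional curvature bound is false: there are sequences of Aloff--Wallach spaces $W_{k,l}=SU(3)/S^1_{k,l}$ carrying homogeneous metrics with sectional curvatures uniformly pinched in $[\delta,1]$, $\delta>0$ fixed, which collapse (injectivity radius and volume tend to $0$) onto the flag manifold $SU(3)/T^2$. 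These are compact, simply connected $7$--manifolds with $Ric\ge 6\delta>0$. Your Bochner-type heuristic breaks down exactly here: the orbits of the nilpotent Killing structure are almost flat only \emph{intrinsically} after rescaling, while the ambient sectional curvatures of planes containing the short directions are governed by the $A$-tensor of the associated fibration and need not tend to zero. The blow-up alternative is equally inconclusive: the pointed rescaled limit is a complete flat manifold with injectivity radius $1$ at the basepoint --- a flat cylinder, say --- which is no contradiction. This is precisely why the paper routes non-collapsing through topology: positive isotropic curvature gives $\pi_2(M)=0$ by Micallef--Moore, and the Petrunin--Tuschmann injectivity radius estimate, whose finite-$\pi_2$ hypothesis is essential (the Aloff--Wallach spaces have $\pi_2=\mathbb{Z}$), then applies.

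Because that injectivity radius bound is only available under \emph{positive} isotropic curvature, one cannot run your single compactness argument directly under $K^{iso}\ge 0$. The paper first proves the theorem for positive isotropic curvature and then reduces the nonnegative case to it: for reducible metrics it uses the de Rham decomposition, the Micallef--Wang description of the low-dimensional factors, and induction on dimension (Lemma \ref{com}); for irreducible metrics it invokes the results of \cite{hs} and \cite{bre}, by which either $(M,g)$ is already diffeomorphic to a symmetric space or $g$ can be perturbed to a $C^\infty$-close metric of positive isotropic curvature still satisfying the pinching hypotheses. None of this machinery appears in your proposal, and without it the non-collapsing --- and hence the convergence to an Einstein limit --- is unjustified. (A secondary, repairable point: with only $L^\infty$ curvature bounds Cheeger--Gromov convergence is $C^{1,\alpha}$, so to speak of the limit's curvature tensor one should first smooth the metrics or argue via harmonic coordinates as you sketch; the collapse issue, by contrast, is a genuine gap.)
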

This result was inspired by the paper of P. Petersen and T. Tao
~\cite{pet} where it is proved that ``almost" quarter-pinching of
sectional curvatures again leads to smooth rigidity as above. The
main difference between their conclusion and ours is that symetric
spaces of rank $\ge 2$ are allowed in our case, while almost
quarter-pinching gives only rank-1 spaces.

We remark that for any $L, \eps$ the conditions $ s \le  L$ and
$\vert Ric - \frac {s}{n}g \vert \le  \eps$ can be achieved just
by rescaling the metric by a large constant. In particular,
consider the connected sum $S^{n-1} \times S^1 \# S^{n-1} \times
S^1$ which admits a metric with positive isotropic curvature by
~\cite{mw}. Rescaling this metric gives the two bounds above.
However this manifold does not support a locally symmetric metric
(irreducible or reducible) of compact type. This is seen by
observing that the fundamental group of the latter space has to
contain an abelian subgroup of finite index. Hence the lower bound
on scalar curvature is necessary. On the other hand it is not
known if just positive Ricci curvature and nonnegative isotropic
curvature already imply that the underlying compact manifold is
diffeomorphic to a locally symmetric space, even without the
assumption of simple-connectivity.

A few remarks about the proof. Let $(M,g)$ be a manifold
satisfying the hypotheses of Theorem \ref{ma}. The main parts of
the proof are obtaining a two-sided bound on sectional curvature
and a lower bound on the injectivity radius of $(M,g)$. An uniform
upper bound on diameter is immediate since the Ricci curvature is
uniformly positive for $\eps$ small enough. The bound on sectional
curvature is the content of Lemma \ref{cur}. The injectivity
radius bound is non-trivial and follows from a theorem of Petrunin
- Tuschmann. To apply their result one needs finite second
homotopy group which is guaranteed by positive isotropic
curvature. To deal with nonnegative isotropic curvature we use the
results of H. Seshadri ~\cite{hs} and S. Brendle ~\cite{bre} which
allow us to reduce the nonnegative case to the positive case.

\section{proof of Theorem \ref{ma}}
We begin with a simple but useful lemma. Let $c \in \R$. By
$$K^{iso} \ge c$$ we mean that
$$K^{iso}(e_i,e_j,e_k,e_l):=\ R_{ikik} +R_{ilil}+R_{jkjk}+R_{jljl}-2R_{ijkl} \ge c$$
for every orthonormal 4-frame $\{e_i,e_j,e_k,e_l\}$.

\begin{lem}\label{cur}
Given $c, C \in \R$, there exists $b=b(c,C,n)$ such that if
$(M^n,g)$ is a Riemannian manifold with
$$K^{iso} \ge c, \ \ \ \  s \le C,$$
then the norm of the Weyl tensor $W$ is bounded by $b$:
$$ \vert W \vert \le b.$$
\end{lem}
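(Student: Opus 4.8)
The key observation is that the condition $K^{iso} \ge c$ is an algebraic condition on the curvature tensor at each point, and (after subtracting off the constant-curvature piece) it is essentially a condition on the Weyl tensor plus the traceless Ricci part. So the first step I would take is to decompose the curvature operator $R = R_{scal} + R_{Ric} + R_W$ into its scalar, traceless-Ricci, and Weyl components, and write $K^{iso}$ in terms of these. Since $K^{iso}$ applied to an orthonormal $4$-frame is a linear functional in $R$, and the scalar and Ricci pieces contribute controlled amounts (the scalar piece contributes a universal multiple of $s$, hence is bounded using $s \le C$; the Ricci piece involves $Ric - \frac{s}{n}g$, which is NOT assumed bounded here — so I need to be a little careful, see below), the inequality $K^{iso} \ge c$ translates into a two-sided-ish constraint on a family of linear functionals of $R_W$ together with lower bounds involving the other pieces.

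The cleaner route, and the one I would actually pursue, is this: first reduce to the case of an Einstein-like normalization pointwise by noting that $K^{iso}$ is invariant under adding a constant-curvature tensor of the appropriate sign — more precisely, if $K^{iso}(g) \ge c$ then subtracting a suitable constant-curvature summand shifts $c$ by a controlled amount, so WLOG we may assume $c$ is, say, $\ge -1$ after rescaling the scalar-curvature contribution. Second, and this is the heart of the matter: \emph{the set of algebraic curvature tensors $R$ on $\R^n$ with $K^{iso}(R) \ge 0$ and $s(R) = 0$ spans a proper cone, but once we also bound $s$ from above we are in a slab, and the Weyl part of $R$ is then confined to a bounded region provided the traceless Ricci part is bounded.} Since the traceless Ricci part is exactly what is \emph{not} controlled in the hypotheses of this lemma, I would instead argue that the Weyl norm is bounded by a quantity depending on $c$, $C$, \emph{and} $|Ric - \frac{s}{n}g|$ — and then observe that $K^{iso} \ge c$ together with $s \le C$ actually forces an upper bound on $|Ric - \frac{s}{n}g|$ too. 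Indeed, choosing frames cleverly (e.g. taking $e_i, e_j$ in eigendirections of $Ric$ and tracing over the remaining directions) turns the $K^{iso} \ge c$ inequalities into lower bounds on individual Ricci eigenvalues, and combined with the upper bound $s \le C$ on their sum this pins each Ricci eigenvalue into a bounded interval; hence $|Ric|$, and a fortiori $|Ric - \frac{s}{n}g|$, is bounded by a constant depending only on $c, C, n$.

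With the full curvature decomposition in hand and both $|s|$ and $|Ric - \frac{s}{n}g|$ now controlled, the final step is the purely algebraic claim: on the finite-dimensional space of algebraic Weyl tensors, the (finitely many, up to the orthogonal group — or rather, a compact family of) linear inequalities coming from $K^{iso} \ge c$ after subtracting the controlled scalar and Ricci contributions define a region whose intersection with $\{|W| \le$ anything$\}$ we must show is bounded. Equivalently: \emph{there is no nonzero algebraic Weyl tensor $W$ with $K^{iso}(W) \ge 0$ for all orthonormal $4$-frames.} This is the key point and I expect it to be the main obstacle. It should follow because the trace of the $K^{iso}$-functional over a suitable family of frames (integrating over the orthogonal group, or summing over an orthonormal basis) reproduces a positive multiple of $|W|^2$ up to scalar- and Ricci-curvature terms which vanish for a pure Weyl tensor; so if $K^{iso}(W) \ge 0$ pointwise for all frames, integrating gives $0 \ge$ (positive constant)$\cdot |W|^2$, forcing $W = 0$. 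Granting this, a standard compactness/homogeneity argument (the unit sphere in Weyl tensors is compact, the map $W \mapsto \min_{\text{frames}} K^{iso}(W)$ is continuous and strictly negative off $0$) upgrades it to the desired quantitative bound $|W| \le b(c,C,n)$.
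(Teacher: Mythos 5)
Your overall strategy --- decompose $R$ into scalar, traceless-Ricci and Weyl parts, control the first two contributions to $K^{iso}$, and then handle the pure Weyl part by a qualitative vanishing statement plus compactness --- is a genuinely different packaging from the paper's explicit componentwise estimates, but two of your stated steps fail. First, the claim that $K^{iso}\ge c$ and $s\le C$ force a bound on $\vert Ric-\frac{s}{n}g\vert$ is false in dimension $4$, which the lemma must cover. The traceless Ricci contribution to $K^{iso}(e_i,e_j,e_k,e_l)$ is a multiple of $E_{ii}+E_{jj}+E_{kk}+E_{ll}$ with $E=Ric-\frac{s}{n}g$; when $n=4$ this is the full trace of $E$ and vanishes identically (equivalently, $E$ is the off-diagonal block of the curvature operator in $\Lambda^2=\Lambda^+\oplus\Lambda^-$, while an isotropic $2$-plane pairs two forms in the same factor $\Lambda^{\pm}$). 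So in dimension $4$ no bound on $E$ can be extracted --- but none is needed, since the Ricci term simply drops out of $K^{iso}$. Your eigenvalue-pinning argument does work for $n\ge5$ (summing the inequalities over a basis yields a lower bound on $(n-4)Ric_{ii}+s$), so the fix is to treat $n=4$ separately.

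Second, the step you yourself flag as the heart of the matter cannot work as described: $K^{iso}$ is linear in the curvature tensor, so averaging it over frames produces an $O(n)$-invariant linear functional, necessarily a multiple of the scalar curvature, which vanishes on Weyl tensors. You cannot obtain the quadratic quantity $\vert W\vert^2$ this way. What averaging does give is that $K^{iso}(W)\ge0$ for all frames forces $K^{iso}(W)\equiv0$, and you then still owe the proof that a Weyl tensor with all isotropic curvatures zero vanishes --- which is exactly the polarization computation the paper runs (the difference of the two orderings of $K^{iso}$ gives $W_{ijkl}$ for distinct indices; rotated frames such as $\{e_i,\tfrac{1}{\sqrt2}(e_j-e_l),e_k,\tfrac{1}{\sqrt2}(e_j+e_l)\}$ together with $\sum_p W_{ipkp}=0$ handle the remaining components). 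The paper runs these identities quantitatively from the outset, getting a two-sided bound on $K^{iso}$ from the identity expressing $4s$ as a sum of $n(n-1)$ isotropic curvatures and hence explicit bounds on the components of $W$, with no compactness argument and no need to control the Ricci tensor at all. Your route is salvageable, but only after repairing both of these steps, at which point it essentially reproduces the paper's polarization identities anyway.
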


\begin{proof}
The proof is similar to that of Proposition 2.5 of ~\cite{mw}.
Note that
$$K^{iso}(e_i,e_j,e_k,e_l)+K^{iso}(e_i,e_j,e_l,e_k): =  2( R_{ikik} +R_{ilil}+R_{jkjk}+R_{jljl}).$$
From this it follows that $4s$ can be expressed as a sum of
$n(n-1)$  isotropic curvatures, Since we have an upper bound on
$s$ and a lower bound on $K^{iso}$, we get an upper bound
$b_1=b_1(c,C,n)$ for $K^{iso}$. We have a lower bound on $K^{iso}$
by hypothesis and hence we have two-sided bounds on
$$4W_{ijkl} = 4R_{ijkl} = K^{iso}(e_i,e_j,e_l,e_k) - K^{iso}(e_i,e_j,e_k,e_l)$$
depending only on $c,C$ and $n$. Since this holds for an arbitrary
orthonormal 4-frame, we can apply the above bound to the 4-frame
$$\left \{ e_i, \ \frac {1}{\sqrt 2} (e_j -e_l), \ e_k, \  \frac {1}{\sqrt 2} (e_j + e_l)  \right \}$$
to see that $\vert W_{ijkj}-W_{ilkl} \vert \le b_2(c,C,n)$. Since
$\sum_p W_{ipkp} =0$, this implies that $\vert W_{ipkp} \vert \le
b_3(c,C,n)$. Hence $\vert W \vert \le b_4(c,C,n).$


\end{proof}
An immediate corollary of Lemma \ref{cur} is that {\it an upper
bound on scalar curvature, a lower bound on isotropic curvature
and an upper bound on the norm of the Ricci tensor gives a bound
on the norm of the Riemann curvature tensor}. This applies, in
particular, to a metric satisfying the hypotheses of Theorem
\ref{ma}.  \vspace{3mm}

The first restriction we impose on $\eps$ is that $\eps \le  \frac
{l}{2n}$. This implies that the Ricci curvature is uniformly
positive:
\begin{equation}\label{ric}
Ric \ge  \frac {l}{2n} g.
\end{equation}

We claim that we can find such an $\eps$ if we assume  that
$(M,g)$ has positive isotropic curvature. Then, by Micallef-Moore
~\cite{mm}, $\pi_2(M)=0$. Theorem 0.4 of ~\cite{pet} states that the
injectivity radius of a compact simply-connected Riemannian
$n$-manifold with finite second homotopy group, bounded sectional
curvature $\vert K \vert \le a$ and positive Ricci curvature $Ric
> bg$ has a positive lower bound on injectivity radius dependent
only on $a,b$ and $n$. The comment following Lemma \ref{cur} and
(\ref{ric}) give us the required bounds on curvature.
\vspace{2mm}

{\it Remark}: If $M$ is even-dimensional, then one has the
following alternative proof for a lower bound on injectivity
radius $inj$. If $inj \rt 0$, then all the characteristic numbers
of $M$, in particular the Euler characteristic , would have to
vanish. On the other hand, a simply-connected Riemannian
$n$-manifold with positive isotropic curvature has to be
homeomorphic to the $n$-sphere ~\cite{mm}. This contradiction
shows that collapse cannot occur in even-dimensions. \vspace{2mm}

Now suppose that there is no $\eps$ for which the conclusion of
Theorem \ref{ma} holds. Then we get a sequence of $(M_i,g_i)$ of
Riemannian $n$-manifolds, none of which is diffeomorphic to a
symmetric space of compact type, with uniformly bounded sectional
curvatures and diameter (by Myers-Bonnet, since (\ref{ric}) holds)
and injectivity radius bounded below. As in ~\cite{pt} we can
assume that a subsequence converges in the $C^\infty$ topology to
a smooth complete Riemannian manifold $(M,g)$. This manifold will
have to be Einstein, of finite diameter (hence compact) and of
nonnegative isotropic curvature. By ~\cite{bre}, $(M,g)$ is
isometric to a symmetric space of compact type or flat. Since
$M_i$ is diffeomorphic to $M$ for large $i$ and $M_i$ is
simply-connected, $M$ cannot be flat. Hence $M_i$ is diffeomorphic
to a symmetric space of compact type for large $i$, which is a
contradiction.

Hence we have established the existence of
\begin{equation}\label{pos}
\eps_p =\eps_p(l,L,n)
\end{equation}
which yields the conclusion in the presence of positive isotropic
curvature.

Next consider the general case of nonnegative isotropic curvature.

\begin{lem}\label{com}
Let $(M^n,\ g)$ be a compact simply-connected Riemannian manifold
with nonnegative isotropic curvature. Suppose that
$$    0 < l  \le   s_h  \le  L, \ \ \vert Ric_g - \frac {s_g}{n}g \vert_g \le \eps$$
for some $0<l \le  L$ and $\eps \le  \frac {2l}{n} $.

Let $(N^k,h)$ be an irreducible factor in the de Rham
decomposition of $N$. If $k=2$ or $3$, $N$ is diffeomorphic to
$S^2$ or $S^3$. If $k \ge 4$, then $(N,h)$ has nonnegative
isotropic curvature and

$$ 0 < \frac {2l}{n} < s_g \le  L , \ \ \ \ \vert Ric_h - \frac {s_h}{k}h \vert_g \le \eps .$$

\end{lem}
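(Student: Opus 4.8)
The plan is to analyze how the curvature hypotheses on $(M,g)$ descend to a factor in the de Rham decomposition. Since $M$ is compact and simply-connected, the de Rham theorem gives an isometric splitting $M = N_1 \times \cdots \times N_m$ into irreducible factors, each of which is itself compact and simply-connected. Fix one factor $(N^k,h)$. The first observation is that the scalar curvature is unchanged in the relevant sense: because the Einstein tensor of $g$ is $\eps$-small and $\eps \le \frac{2l}{n}$, (\ref{ric}) applies (with $l/2$ in place of the bound there, after adjusting constants), so $Ric_g$ is uniformly positive; in particular every factor $N_i$ has positive Ricci curvature, hence is compact with finite fundamental group, hence — being already simply-connected — compact. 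Positive Ricci curvature on the product forces positive Ricci curvature on each factor (the Ricci tensor of a product is block-diagonal, with the block on $TN_i$ equal to $Ric_{h_i}$). This is what kills the possibility of a flat or low-dimensional torus factor.

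Next I would treat the low-dimensional factors. If $k = 2$, then $N$ is a compact simply-connected surface of positive curvature (positivity of $Ric$ in dimension $2$ is positivity of the Gauss curvature), hence diffeomorphic to $S^2$. If $k = 3$, then $N^3$ is a compact simply-connected $3$-manifold with positive Ricci curvature; by Hamilton's theorem it is diffeomorphic to $S^3$. (Alternatively, since $N$ is a de Rham factor of a manifold with $\nc$, and in dimension $3$ isotropic curvature conditions reduce to Ricci-type conditions, one can invoke Hamilton directly.) So assume $k \ge 4$.

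For $k \ge 4$ the main point is to show $(N,h)$ inherits nonnegative isotropic curvature and the stated Einstein-tensor bound. For the curvature: the curvature tensor of the product $M = N_1 \times \cdots \times N_m$ restricted to $4$-frames tangent to a single factor $N_i$ coincides with the curvature tensor of $N_i$ — there are no cross terms — so the isotropic-curvature inequality for $g$, applied to orthonormal $4$-frames in $TN$, becomes exactly the isotropic-curvature inequality for $h$. Hence $(N,h)$ has $\nc$. For the Einstein-tensor estimate: writing a tangent vector to $M$ as a sum of components in the factors, the traceless Ricci tensor $Ric_g - \frac{s_g}{n}g$ has, on the block $TN \otimes TN$, the value $Ric_h - \frac{s_g}{n}h$. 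Now $Ric_h - \frac{s_h}{k}h = \left(Ric_h - \frac{s_g}{n}h\right) + \left(\frac{s_g}{n} - \frac{s_h}{k}\right)h$, and the first summand has norm $\le \eps$ by restriction of the hypothesis; the subtlety is the second summand. Here I expect the main obstacle: one must check that $\frac{s_h}{k}$ is close to $\frac{s_g}{n}$. This follows because $s_g = \sum_i s_{h_i}$ and, comparing the trace of $Ric_g - \frac{s_g}{n}g$ over $TN$ (which is $s_h - \frac{k}{n}s_g$ and is bounded by $\sqrt{k}\,\eps$ in absolute value since $|Ric_g - \frac{s_g}{n}g| \le \eps$), one gets $\left|\frac{s_h}{k} - \frac{s_g}{n}\right| \le \frac{\eps}{\sqrt{k}} \le \eps$. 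Combining, $|Ric_h - \frac{s_h}{k}h|_h \le \eps + \eps \cdot |h|_h^{\text{(appropriately normalized)}}$; the genuinely delicate bookkeeping is to confirm that the constant stays exactly $\eps$ as stated rather than a multiple of it — this presumably uses that the cross-term correction $\left(\frac{s_g}{n}-\frac{s_h}{k}\right)h$ is \emph{trace-reversed} relative to the traceless part, so that the two contributions are orthogonal in the space of symmetric $2$-tensors and the norms add in quadrature rather than linearly, keeping the total $\le \eps$. Finally, the scalar curvature bounds $0 < \frac{2l}{n} < s_g \le L$ are just (\ref{ric}) traced (giving $s_g \ge \frac{l}{2} \ge \frac{2l}{n}$ for $n \ge 4$, or directly from the hypothesis $s_h \ge l$) together with $s_g = \sum_i s_{h_i} \le$ (bound coming from $s_g \le L$, noting all $s_{h_i} > 0$).
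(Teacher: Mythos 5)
Your overall strategy for the $k\ge 4$ case is the paper's: restrict the traceless Ricci tensor of $g$ to the block $TN\otimes TN$ to get $\vert Ric_h-\frac{s_g}{n}h\vert_h\le\eps$, then exploit the orthogonal splitting of a symmetric $2$-tensor into traceless and pure-trace parts. But the way you close that step does not work as written. You expand $Ric_h-\frac{s_h}{k}h=(Ric_h-\frac{s_g}{n}h)+(\frac{s_g}{n}-\frac{s_h}{k})h$, apply the triangle inequality (getting roughly $2\eps$), and then appeal to the "two contributions" being orthogonal and "adding in quadrature" to recover $\eps$. Those two summands are \emph{not} orthogonal (the second is pure trace, but the first is not traceless), and even if they were, Pythagoras applied to that decomposition would only give a bound $\ge\eps$, never $\le\eps$. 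The orthogonality you correctly identify must be run in the other direction, with no triangle inequality at all: $\vert Ric_h-\frac{s_g}{n}h\vert_h^2=\vert Ric_h-\frac{s_h}{k}h\vert_h^2+k\,\vert\frac{s_h}{k}-\frac{s_g}{n}\vert^2$, so the traceless part is a component of a tensor of norm $\le\eps$ and hence itself has norm $\le\eps$. This single identity is the paper's entire computation for $k\ge4$, and it simultaneously gives the trace estimate $\vert s_h-\frac{k}{n}s_g\vert\le\sqrt{k}\,\eps$ that you derive separately.

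Two further points. First, the lemma's conclusion includes a \emph{lower} bound $\frac{2l}{n}$ on the scalar curvature of the factor (the statement has $s_g$ and $s_h$ transposed in two places, which appears to have misled you into thinking this was just the hypothesis restated); you never derive it. It comes from the trace estimate above: $s_h\ge\frac{k}{n}s_g-\sqrt{k}\,\eps$, combined with $k\ge4$, $s_g\ge l$ and $\eps\le\frac{2l}{n}$ — this is precisely where those two hypotheses are used. Second, your treatment of $k=2,3$ (and of $s_h\le s_g\le L$) rests on $Ric_g>0$, but $\eps\le\frac{2l}{n}$ alone gives only $Ric_g\ge(\frac{l}{n}-\eps)g\ge-\frac{l}{n}g$; positivity requires $\eps<\frac{l}{n}$, which holds in the application (where $\eps\le\frac{l}{2n}$ is imposed) but not from the lemma's stated hypotheses. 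The paper avoids this for the low-dimensional factors by instead quoting the Micallef--Wang structure theorem for reducible manifolds with nonnegative isotropic curvature, which identifies the $2$- and $3$-dimensional factors directly, though it too invokes positive Ricci curvature for the bound $s_h\le L$.
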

\begin{proof}
The statement about $k=2$ or $3$ follows from the description of
reducible manifolds with nonnegative isotropic curvature given by
M. Micallef and M. Wang (Theorem 3.1, ~\cite{mw}). If $k \ge 4$,
note that
\begin{align} \notag
\vert Ric_g - \frac {s_g}{n}g \vert_g^2 & \ \ \ge \ \ \vert Ric_h - \frac {s_g}{n}h \vert_h ^2 \notag \\
& \ = \ \ \vert Ric_h - \frac {s_h}{k} \vert_h ^2 \ \ + \ \ k^2 \vert
\frac{s_h}{k} - \frac {s_g}{n} \vert ^2. \notag
\end{align}

Hence
$$ \vert Ric_h - \frac {s_h}{k}h \vert_h  \ \le \ \eps .$$
and
$$s_h \ \ge \ \frac {k}{n} s_g - \eps \ \ge \ \frac {4l}{n} -   \frac {2l}{n} \ = \ \frac {2l}{n} $$

Moreover, since $(M,g)$ has positive Ricci curvature, so does each irreducible component and
hence $s_h \le s_g  \le L$.
\end{proof}

We can now complete the proof of the theorem by induction. The
proof for the first nontrivial dimension $n=4$ is the same as that
for the inductive step, so we assume that the result is true in
all dimensions less than $n$. Let $(M^n,g)$ be a manifold as in
Theorem \ref{ma} with the norm of the Einstein tensor being
smaller than
$$\eps_r(l,L,n) := min \left \{\frac {2l}{n}, \ \eps(\frac {2l}{n},L,4),...,\eps(\frac {2l}{n},L,n-1) \right \}.$$

If $(M,g)$ is reducible, it is enough to prove that each irreducible component of $(M,g)$ is
diffeomorphic to a symmetric space of compact type. Let $(N^k,h), \ 1 \le k \le n-1$ be such a component.
By Lemma \ref{com} and the inductive hypothesis we are done.

Suppose $(M,g)$ is irreducible. We claim that if the Einstein
tensor of $g$ is $\frac {1}{2} {\eps_p(\frac {l}{2},2L,n)}$-small
(where $\eps_p$ is defined by (\ref{pos})) then we have the
desired conclusion. By the results of ~\cite{hs} and ~\cite{bre}
the following holds: Either $(M,g)$ is diffeomorphic to a
symmetric space with nonconstant sectional curvature or we can
find a metric $\bar g$ with positive isotropic curvature as close
(in the $C^\infty$ topology) to $g$ as we want. Choose $\bar g$ so
close to $g$ that
$$    0 < \frac {l}{2}  \le   s_{\bar g}  \le  2L, \ \ \vert Ric_{\bar g} -
\frac {s_{\bar g}}{n}{\bar g} \vert_{\bar g} \le \eps_p.$$

Since $(M,\bar g)$ has positive isotropic curvature and satisfies
the above bounds, it is diffeomorphic to a symmetric space by our
earlier result.

Finally we choose
$$\eps (l,L,n) = min \left \{ \frac {1}{2} \ {\eps_p ( \frac
{l}{2},2L,n )},\ \ \eps_r(l,L,n) \right \}.$$

\hfill $\square$


\begin{thebibliography} {10}



\bibitem{bre} S. Brendle
\textit{Einstein manifolds with nonnegative isotropic curvature are locally symmetric},
arXiv:0812.0335



\bibitem{mm} M. Micallef, J. Moore,
\textit{Minimal two-spheres and the topology of manifolds with positive curvature on totally isotropic two-planes},
Ann. of Math. (2) {\bf 127} (1988), no. 1, 199-227.

\bibitem{mw} M. Micallef, M. Wang,
\textit{Metrics with nonnegative isotropic curvature}, Duke Math. J. {\bf 72} (1993), no. 3, 649-672.


\bibitem{pt} P. Petersen, T. Tao,
\textit{Classification of almost quarter-pinched manifolds}, To appear in Proc. of AMS.

\bibitem{pet} A. Petrunin, W. Tuschmann,
\textit{Diffeomorphism finiteness, positive pinching, and second homotopy},
Geom. Funct. Anal.  {\bf 9}  (1999),  no. 4, 736-774.


\bibitem{hs} H. Seshadri,
\textit{Manifolds with nonnegative isotropic curvature},\\
http://www.math.iisc.ernet.in/~harish/papers/pic-cag.pdf







\end{thebibliography}
\end{document}